\newcommand{\bigslant}[2]{{\raisebox{.2em}{$#1$}\left/\raisebox{-.2em}{$#2$}\right.}}
\newcommand{\adhoc}[2]{{\raisebox{-.2em}{$#1$}\oplus\raisebox{.2em}{$#2$}}}
\begin{document}

\title{Examples of foliations with infinite dimensional special cohomology
}


\author{Andrzej Czarnecki         \and
        Pawe\l{} Ra\'{z}ny 
}


\institute{Andrzej Czarnecki \at Jagiellonian University, \L{}ojasiewicza 6, 30-348 Krakow, Poland\\\email{andrzejczarnecki01@gmail.com} \and Pawe\l{} Ra\'{z}ny \at Jagiellonian University, \L{}ojasiewicza 6, 30-348 Krakow, Poland\\\email{pawel.razny@student.uj.edu.pl}}


\maketitle

\begin{abstract}
We present examples of foliations with infinite dimensional basic symplectic and complex cohomologies, along with a general sufficient condition for such phenomena. This puts restrictions on possible generalizations of several finiteness results from Riemannian foliations to any broader class. The examples are also noteworthy for the unusual behaviour of their basic de Rham cohomology.
\keywords{Foliations, Transverse structures, Basic cohomology}
\subclass{53C12,57R18}
\end{abstract}

\section{Introduction.}

The initial purpose of this short paper was to present exmples of foliations with infinite dimensional special basic cohomologies. A simple algebraic Lemma \ref{niesk} (based on methods used in \cite{Ang}) enables us to relate special basic cohomologies to ordinary basic cohomology of a foliation. We thus obtain interesting examples of infinite dimensional basic Bott-Chern and Aeppli cohomologies (in the transversely holomorphic case), and basic $(d+d^{\Lambda})$- and $dd^{\Lambda}$-cohomologies (in the transversely symplectic case). These cohomologies (and especially their non-foliated counterparts) are subject of extensive study (cf. \cite{A1}, \cite{appli}, \cite{Ang}, \cite{Ida}, \cite{My}, \cite{Yau}), in particular it can be proved (cf. \cite{My}) that for Riemannian foliations they all are finite dimensional. Our examples amount to say that certain compactness conditions in those proofs cannot be dropped, which is by no means obvious (cf. \cite{Ma}, which works for both compact and non-compact manifolds). The examples also violate various dualities present in the Riemannian case. We will begin the next section with a review of all relevant notions.

However, the behaviour of special basic cohomologies in our examples revealed an interesting picture. Of all the peculiarities collected in the last section, we single out the following: the basic cohomology of our transversely holomorphic example has infinite dimension in degree 2 and 4, and finite dimension in degree 3. To the extent of our knowledge, no example of basic cohomology reverting to finite dimension was ever given. It is worth pointing out that we know essentially only two ways of producing infinite dimensional basic cohomology: either Schwarz' \cite{schwarz} or Ghys' \cite{ghys}. We feel it is important to better understand how the infinite dimensional basic cohomology may arise in non-Riemannian foliations. Lemma \ref{niesk} and its application to the transversely holomorphic example shed some light on this matter.

\section{Foliations.}

\subsection{Transverse structures.}

We start with a brief review of some basic facts about foliations and transverse structures. The interested reader is referred to \cite{M1} for a thorough exposition. All manifolds are assumed to be compact.

\begin{definition}
A codimension $n$ foliation $\mathcal{F}$ on a smooth manifold $M$ is given by the following data:

\begin{itemize}
\item an open cover $\mathcal{U}:=\{U_i\}_{i\in I}$ of M;
\item a $n$-dimensional smooth manifold $T_0$;
\item for each $U_i\in\mathcal{U}$ a submersion $f_i\,:\, U_i\longrightarrow T_0$ with connected fibres (called \emph{plaques});
\item for each intersections $U_i\cap U_j\neq\emptyset$ a local diffeomorphism $\gamma_{ij}$ of $T_0$ such that $f_j=\gamma_{ij}\circ f_i$.
\end{itemize}

The last condition ensures that the plaques glue nicely to form a partition of $M$ by submanifolds of codimension $n$, called \emph{leaves} of $\mathcal{F}$.
\end{definition}

We call $T=\coprod\limits_{U_i\in\mathcal{U}}f_i(U_i)$ the transverse manifold of $\mathcal{F}$. The local diffeomorphisms $\gamma_{ij}$ generate a pseudogroup $\Gamma$ of transformations on $T$ (called the \emph{holonomy pseudogroup}). The space of leaves $\bigslant{M}{\mathcal{F}}$ of the foliation $\mathcal{F}$ can be identified with $\bigslant{T}{\Gamma}$. We note that neither $T$ nor $\bigslant{T}{\Gamma}$ need to be compact, even if $M$ is.

\begin{definition}
A smooth form $\omega$ on $M$ is called basic if for any vector field $X$ tangent to the leaves of $\mathcal{F}$ we have

$$
i_X\omega= i_Xd\omega= 0
$$

Basic forms are in one-to-one correspondence with $\Gamma$-invariant smooth forms on $T$, a point of view that we will take below.
\end{definition}

It is clear that $d\omega$ is basic for any basic form $\omega$. Hence the set of basic forms of $\mathcal{F}$, $\Omega^{\bullet}(M\slash\mathcal{F})$, is a subcomplex of the de Rham complex of $M$. We define the \emph{basic cohomology} (or sometimes \emph{basic de Rham cohomology} if other basic cohomologies will be in play) of $\mathcal{F}$ to be the cohomology of this subcomplex and denote it by $H^{\bullet}(M\slash\mathcal{F})$.

A transverse structure to $\mathcal{F}$ is any $\Gamma$-invariant structure on $T$. We will need the following examples.

\begin{definition}
$\mathcal{F}$ is said to be \emph{transversely orientable} if $T$ is orientable and all the $\gamma_{ij}$ are orientation preserving. $\mathcal{F}$ is said to be \emph{homologically orientable} if the top basic cohomology space $H^{n}(M\slash\mathcal{F})=\mathbb{R}$. Contrary to the non-foliated case, these two notions are not equivalent as we will see later on.
\end{definition}

\begin{definition}
$\mathcal{F}$ is said to be \emph{Riemannian} if $T$ has a $\Gamma$-invariant Riemannian metric. 
\end{definition}

We emphasize here a strong property of the Riemannian foliations: their basic cohomology is always finite dimensional. We refer the reader to \cite{E1} for the proof applicable also to the other cohomologies considered below.

From now on, we will restrict our attention to foliations of even codimension $2n$.

\begin{definition}
$\mathcal{F}$ is said to be \emph{transversely symplectic} if $T$ admits a $\Gamma$-invariant closed non-degenerate 2-form $\omega$, called a transverse symplectic form. Again, contrary to the non-foliated case this does not imply homological orientability, although it makes $\mathcal{F}$ transversally oriented. 
\end{definition}

\begin{definition}
$\mathcal{F}$ is said to be \emph{transversely holomorphic} if $T$ admits a complex structure that makes all the $\gamma_{ij}$ holomorphic.
\end{definition}

\begin{definition}
A foliation is said to be \emph{Hermitian} if it is both transversely holomorphic and Riemannian. We emphasize that neither of these condition implies the other.
\end{definition}

If $\mathcal{F}$ is transversely holomorphic, we have the standard decomposition of the space of complex valued forms $\Omega^{\bullet}({M\slash\mathcal{F},\mathbb{C}})$ into forms of type $(p,q)$ and $d$ decomposes as $\partial+\bar{\partial}$ of orders (1,0) and (0,1), respectively. We can then define the basic Dolbeault double complex $\left(\Omega^{\bullet,\bullet}({M\slash\mathcal{F},\mathbb{C}}),\partial,\bar{\partial}\right)$, basic Dolbeault cohomology $H^{\bullet}_{\bar{\partial}}(M\slash\mathcal{F})=\bigslant{\text{ker}\bar{\partial}}{\text{im }\bar{\partial}}$, and basic Fr\"{o}licher spectral sequence just like in the non-foliated case. We note that for a Hermitian foliation the basic Dolbeault cohomology is again finite dimensional and even without this assumption the basic Fr\"{o}licher sequence converges to the basic cohomology.

\subsection{Bott-Chern and Aeppli cohomology theories.}

Let $M$ be a manifold endowed with a transversely holomorphic foliation $\mathcal{F}$ of complex codimension $n$. Using the operators $\partial$ and $\bar{partial}$ above we can construct the \emph{basic Bott-Chern} and \emph{basic Aeppli} cohomologies of $\mathcal{F}$:

\begin{align*}
H^{\bullet,\bullet}_{BC}(M\slash\mathcal{F}):=&\frac{\text{ker} \partial \cap \text{ker} \bar{\partial}\cap \Omega^{\bullet,\bullet}(M\slash\mathcal{F})}{\text{im } \partial\bar{\partial} \cap \Omega^{\bullet,\bullet}(M\slash\mathcal{F})}  \\
H^{\bullet,\bullet}_{A}(M\slash\mathcal{F}):=&\frac{\text{ker} \partial\bar{\partial} \cap \Omega^{\bullet,\bullet}(M\slash\mathcal{F})}{\text{im } \partial \cap \text{im } \bar{\partial}\cap \Omega^{\bullet,\bullet}(M\slash\mathcal{F}))}
\end{align*}

Assuming the foliation is Hermitian, we have some restrictions on these cohomologies. We recall the results from \cite{My}.

\begin{theorem}
If $M$ is a compact manifold endowed with a Hermitian foliation $\mathcal{F}$, then the dimensions of $H^{\bullet,\bullet}_{BC}(M\slash\mathcal{F})$ and $H^{\bullet,\bullet}_A(M\slash\mathcal{F})$ are finite.
\end{theorem}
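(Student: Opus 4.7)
The plan is to adapt Schweitzer's Hodge-theoretic proof of finite dimensionality of Bott--Chern and Aeppli cohomologies on compact Hermitian manifolds to the foliated setting, invoking the basic elliptic theory of El Kacimi--Alaoui for Riemannian foliations.

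First I would set up the necessary Hermitian machinery on basic forms. Because $\mathcal{F}$ is Hermitian, the transverse manifold $T$ carries a $\Gamma$-invariant Hermitian metric, inducing a pointwise Hermitian inner product on $\Omega^{\bullet,\bullet}(M\slash\mathcal{F})$ together with a transverse Hodge star $\bar{\ast}$. This gives formal adjoints $\partial^{\ast}$ and $\bar{\partial}^{\ast}$ on the basic Dolbeault complex. The compactness of $M$ combined with the existence of a bundle-like metric (which exists because the foliation is Riemannian) ensures that these basic adjoints behave well with respect to a global $L^{2}$-pairing on basic forms and that the standard Weitzenb\"ock/integration by parts identities hold basically.

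Next I would introduce the fourth-order basic Bott--Chern and Aeppli Laplacians
\begin{align*}
\Delta_{BC} &:= (\partial\bar{\partial})(\partial\bar{\partial})^{\ast} + (\partial\bar{\partial})^{\ast}(\partial\bar{\partial}) + (\bar{\partial}^{\ast}\partial)(\bar{\partial}^{\ast}\partial)^{\ast} + (\bar{\partial}^{\ast}\partial)^{\ast}(\bar{\partial}^{\ast}\partial) + \bar{\partial}^{\ast}\bar{\partial} + \partial^{\ast}\partial, \\
\Delta_{A} &:= \partial\partial^{\ast} + \bar{\partial}\bar{\partial}^{\ast} + (\partial\bar{\partial})^{\ast}(\partial\bar{\partial}) + (\partial\bar{\partial})(\partial\bar{\partial})^{\ast} + (\bar{\partial}\partial^{\ast})^{\ast}(\bar{\partial}\partial^{\ast}) + (\bar{\partial}\partial^{\ast})(\bar{\partial}\partial^{\ast})^{\ast},
\end{align*}
acting on $\Omega^{\bullet,\bullet}(M\slash\mathcal{F})$. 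A pointwise symbol computation---identical to Schweitzer's in the non-foliated case, since it depends only on the transverse Hermitian data---shows that both operators have principal symbol of the form $|\xi|^{4}\,\mathrm{Id}$ in the transverse directions, i.e.\ they are \emph{basic transversely elliptic}. From the explicit expressions one checks the standard orthogonal Hodge-type decompositions
\[
\Omega^{\bullet,\bullet}(M\slash\mathcal{F}) = \ker\Delta_{BC} \oplus \mathrm{im}\,(\partial\bar{\partial}) \oplus \bigl(\mathrm{im}\,\partial^{\ast} + \mathrm{im}\,\bar{\partial}^{\ast}\bigr),
\]
and the analogous one for $\Delta_{A}$, which identify $H^{\bullet,\bullet}_{BC}(M\slash\mathcal{F}) \cong \ker\Delta_{BC}$ and $H^{\bullet,\bullet}_{A}(M\slash\mathcal{F}) \cong \ker\Delta_{A}$.

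Finally I would invoke El Kacimi--Alaoui's theorem (\cite{E1}) that the kernel of any basic transversely elliptic self-adjoint operator on a compact manifold endowed with a Riemannian foliation is finite dimensional; applied to $\Delta_{BC}$ and $\Delta_{A}$ this yields the claim. The main obstacle is the symbol computation together with the verification of the Hodge decomposition: these are algebraic identities on the Hermitian vector bundle $\bigwedge^{\bullet,\bullet} N^{\ast}\mathcal{F}$, so they transfer from the non-foliated case without modification, provided one is careful that every operator involved preserves basic forms, which follows from $\Gamma$-invariance of the transverse Hermitian structure. The rest is then a direct application of the foliated elliptic theory.
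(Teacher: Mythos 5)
The paper does not prove this theorem itself --- it recalls it from \cite{My}, whose proof is exactly the one you outline: Schweitzer's fourth-order Bott--Chern and Aeppli Laplacians, transverse ellipticity, and El Kacimi-Alaoui's basic Hodge theory from \cite{E1}. Your plan is correct and is essentially the same argument as the cited source (your only imprecision, the claim that the principal symbol is literally $|\xi|^{4}\,\mathrm{Id}$, is harmless since all that is needed is transverse ellipticity, which does hold).
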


\begin{corollary} \label{dual}
If $M$ is a compact manifold endowed with a Hermitian homologically orientable foliation $\mathcal{F}$, then there is an isomorphism

$$
H^{p,q}_{BC}(M\slash\mathcal{F}) \longrightarrow H^{n-p,n-q}_A(M\slash\mathcal{F})
$$

induced by the transverse Hodge star operator.
 
\end{corollary}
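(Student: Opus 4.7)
The plan is to copy, mutatis mutandis, the classical proof of the analogous duality on compact Hermitian manifolds, working on the level of basic forms. Since $\mathcal{F}$ is Hermitian, the transverse manifold $T$ carries a $\Gamma$-invariant Hermitian metric, hence a transverse volume form and a transverse Hodge star
$$* : \Omega^{p,q}(M\slash\mathcal{F}) \longrightarrow \Omega^{n-p,n-q}(M\slash\mathcal{F})$$
on basic forms. Homological orientability, together with compactness of $M$, ensures that transverse integration $\int_{M\slash\mathcal{F}}$ is a non-degenerate pairing in top degree, so the $L^2$-formal adjoints $\partial^{*}$ and $\bar\partial^{*}$ of $\partial$ and $\bar\partial$ are well defined on the basic complex and satisfy the familiar identities $\partial^{*} = -\,*\,\bar\partial\,*$ and $\bar\partial^{*} = -\,*\,\partial\,*$ (with signs independent of bidegree).

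First I would write out the fourth-order basic Bott-Chern and Aeppli Laplacians $\Delta_{BC}$ and $\Delta_{A}$ in terms of $\partial$, $\bar\partial$, $\partial^{*}$ and $\bar\partial^{*}$. A term-by-term substitution using the two identities above yields
$$*\,\Delta_{BC} \;=\; \Delta_{A}\,*$$
on basic $(p,q)$-forms, so $*$ restricts to a bijection between the space of $\Delta_{BC}$-harmonic basic $(p,q)$-forms and the space of $\Delta_{A}$-harmonic basic $(n-p,n-q)$-forms.

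Second, I would invoke the basic Hodge decomposition underlying the preceding theorem of \cite{My}: in the Hermitian setting, the spaces of $\Delta_{BC}$- and $\Delta_{A}$-harmonic basic forms are canonical representatives of $H^{\bullet,\bullet}_{BC}(M\slash\mathcal{F})$ and $H^{\bullet,\bullet}_{A}(M\slash\mathcal{F})$ respectively. The bijection from the previous step then descends to the announced isomorphism in cohomology.

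The algebra in the middle step is a routine transcription of the non-foliated argument and is not where the difficulty lies. The actual obstacle is ensuring that the $L^{2}$-theory behind the basic Hodge decomposition is genuinely in force: one needs a bundle-like Riemannian metric so that the formal adjoints coincide with honest $L^{2}$-adjoints on the completion of the basic complex, and one needs homological orientability so that the top-degree pairing is non-degenerate and integration by parts leaves no residual boundary term on basic forms. Both hypotheses appear in the statement for precisely this reason and are already absorbed into the cited theorem, so the corollary reduces, as described above, to the compatibility of $*$ with the two Laplacians.
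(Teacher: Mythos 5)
The paper itself offers no proof of this corollary: it is recalled verbatim from \cite{My}, so there is no in-text argument to compare against. Your proposal is the standard route (the foliated transcription of Schweitzer's argument via the fourth-order Bott--Chern and Aeppli Laplacians and the commutation $*\,\Delta_{BC}=\Delta_{A}\,*$ on harmonic representatives), which is essentially how the cited reference establishes it, so the overall plan is sound. Two points deserve sharper treatment than you give them: first, the $\mathbb{C}$-linear transverse Hodge star sends $(p,q)$-forms to $(n-q,n-p)$-forms, so the map landing in bidegree $(n-p,n-q)$ is really the conjugate-linear star $\bar{*}$, and the adjoint identities should be written accordingly; second, the role of homological orientability is not about boundary terms (there are none on a closed manifold) but about tautness --- for a Riemannian foliation the formal adjoint of $d$ on basic forms is $\pm *d*$ only up to a correction by the mean curvature form, and homological orientability is exactly the condition allowing one to choose a bundle-like metric with vanishing basic mean curvature so that the clean formulas $\partial^{*}=-\bar{*}\,\partial\,\bar{*}$ and $\bar\partial^{*}=-\bar{*}\,\bar\partial\,\bar{*}$ hold and the harmonic-space argument goes through.
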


We also note that for a K\"{a}hler manifold these cohomologies are isomorphic to Dolbeault cohomology -- so their failure to do so measures how far a manifold is from being K\"{a}hler. The same applies to the foliated case. 

\subsection{$dd^{\Lambda}$- and ($d+d^\Lambda$)-cohomology theories.}

Let $\mathcal{F}$ be a transversely symplectic foliation of codimension $2n$ on $M$ with a basic symplectic form $\omega$. Let us start by defining the symplectic star operator for $\mathcal{F}$. The transverse symplectic form defines a non-degenerate pairing $\tilde{G}$ of the vector fields on the transverse manifold. We can then extend it to a (non-degenerate) pairing $G$ on basic forms.

\begin{definition}
The symplectic star operator is a linear operator

$$
*_s\,:\,\Omega^{k}(M\slash\mathcal{F})\longrightarrow\Omega^{2n-k}(M\slash\mathcal{F})
$$

uniquely defined by the formula

$$
\alpha_1\wedge *_s\alpha_2 = G(\alpha_1,\alpha_2)\frac{\omega^n}{n!}
$$

where $\alpha_1$ and $\alpha_2$ are arbitrary basic $k$-forms.
\end{definition}

The symplectic star operator is an isomorphism, cf. \cite{Ma}, and \cite{BC} for an account in the foliated case. Out of many operators connected to the symplectic star, we only use 

$$
d^{\Lambda}\alpha:=(-1)^{k+1}*_s d*_s(\alpha)
$$

where $\alpha$ is again a basic $k$-form. We point out the relation $dd^{\Lambda}+d^{\Lambda}d=0$. We can use this operator to define basic cohomology theories similar to those reviewed in the previous subsection:

\begin{align*}
H^{\bullet}_{d^{\Lambda}}(M\slash\mathcal{F})&:=\frac{Ker(d^{\Lambda})}{Im(d^{\Lambda})}
\\ H^{\bullet}_{d+d^{\Lambda}}(M\slash\mathcal{F})&:=\frac{Ker(d+d^{\Lambda})}{Im(dd^{\Lambda})}
\\ H^{\bullet}_{dd^{\Lambda}}(M\slash\mathcal{F})&:=\frac{Ker(dd^{\Lambda})}{Im(d)+Im(d^{\Lambda})}
\end{align*}

It is easy to see that the basic $d^{\Lambda}$-cohomology is simply the basic cohomology with reversed gradation and hence will not concern us. In a manner slightly different from Corollary \ref{dual}, the symplectic star gives Poincar\'e dualities in the last two cohomologies

\begin{align*}
H^{p}_{dd^{\Lambda}}(M\slash\mathcal{F})&\longrightarrow H^{2n-p}_{dd^{\Lambda}}(M\slash\mathcal{F}) \\
H^{p}_{d+d^{\Lambda}}(M\slash\mathcal{F})&\longrightarrow H^{2n-p}_{d+d^{\Lambda}}(M\slash\mathcal{F})
\end{align*}

We note that this does not depend on any transverse Riemannian structure (which may well not exist), nor on the homological orientation.

It can be shown, using \cite{My} and \cite{Yau}, that for a Riemannian transversely symplectic foliation $H^{\bullet}_{dd^{\Lambda}}$ and $H^{\bullet}_{d+d^{\Lambda}}$ are finite dimensional.

\section{An algebraic lemma and its consequences.}

Let $I_1$, $I_2$, $I_{12}$, $K_1$, $K_2$, $K_{12}$ be vector spaces satisfying

$$
\begin{array}{ccccccc}
I_{12}\subset I_1,I_2 &;& I_1\subset K_1 &;& I_2\subset K_2 &;& K_1,K_2\subset K_{12}
\end{array}
$$

Then the following lemma holds:

\begin{lemma}\label{niesk}
If $\bigslant{K_1}{I_1}$ or $\bigslant{K_2}{I_2}$ have infinite dimension then $\bigslant{\left(K_1\cap K_2\right)}{I_{12}}$ or $\bigslant{K_{12}}{\left(I_1+I_2\right)}$ has infinite dimension as well.
\end{lemma}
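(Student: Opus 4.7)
I would argue by contrapositive: assume that both $K_{12}/(I_1+I_2)$ and $(K_1\cap K_2)/I_{12}$ are finite-dimensional, and deduce that $K_1/I_1$ is finite-dimensional (the argument for $K_2/I_2$ is symmetric). The strategy is to squeeze $K_1/I_1$ between two finite-dimensional pieces via the short exact sequence
\[
0\longrightarrow \bigslant{\bigl(K_1\cap (I_1+I_2)\bigr)}{I_1}\longrightarrow \bigslant{K_1}{I_1} \longrightarrow \bigslant{K_1}{K_1\cap (I_1+I_2)}\longrightarrow 0,
\]
so the task reduces to bounding the outer two terms.

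For the right-hand term, the composition $K_1\hookrightarrow K_{12}\twoheadrightarrow K_{12}/(I_1+I_2)$ has kernel $K_1\cap (I_1+I_2)$, hence induces an injection
\[
\bigslant{K_1}{K_1\cap (I_1+I_2)}\hookrightarrow \bigslant{K_{12}}{I_1+I_2},
\]
and the latter is finite-dimensional by hypothesis. For the left-hand term, the key observation is the identity
\[
K_1\cap (I_1+I_2)\;=\;I_1+(K_1\cap I_2).
\]
The inclusion $\supset$ is trivial; for $\subset$, if $y=a+b$ with $a\in I_1\subset K_1$ and $b\in I_2$, then $b=y-a\in K_1$, so $b\in K_1\cap I_2$. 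Using $I_{12}\subset I_1\cap I_2$, the second isomorphism theorem then gives
\[
\bigslant{\bigl(K_1\cap (I_1+I_2)\bigr)}{I_1}\;\cong\; \bigslant{(K_1\cap I_2)}{(I_1\cap I_2)}.
\]

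It remains to show the right-hand side above is finite-dimensional. Since $I_2\subset K_2$, we have $K_1\cap I_2\subset K_1\cap K_2$, so the inclusion induces an injection $(K_1\cap I_2)/I_{12}\hookrightarrow (K_1\cap K_2)/I_{12}$, whose target is finite-dimensional by hypothesis. Quotienting further by $(I_1\cap I_2)/I_{12}$ yields $(K_1\cap I_2)/(I_1\cap I_2)$, which is therefore also finite-dimensional. Feeding this back into the short exact sequence displayed at the start shows $K_1/I_1$ is finite-dimensional, completing the contrapositive.

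The main subtlety I anticipate is purely bookkeeping: the space $I_{12}$ need not equal $I_1\cap I_2$, so the naive Mayer--Vietoris-type sequence $0\to (K_1\cap K_2)/I_{12}\to K_1/I_1\oplus K_2/I_2\to K_{12}/(I_1+I_2)$ fails to be exact at the left. The detour through the identity $K_1\cap(I_1+I_2)=I_1+(K_1\cap I_2)$ is precisely what lets one absorb the discrepancy between $I_{12}$ and $I_1\cap I_2$ into a finite-dimensional quotient, and this is the one step where I would be careful to verify every inclusion.
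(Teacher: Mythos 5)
Your proof is correct and is essentially the paper's argument in contrapositive form: the identity $K_1\cap(I_1+I_2)=I_1+(K_1\cap I_2)$ you verify is exactly the paper's claim that its sequence $\bigslant{(K_1\cap I_2)}{I_{12}}\to\bigslant{K_1}{I_1}\to\bigslant{K_{12}}{(I_1+I_2)}$ is exact in the middle, and you bound the two outer terms by the same two spaces. The only difference is presentational (contrapositive with explicit bookkeeping versus contradiction with the exactness left to the reader).
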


\begin{proof}
Without loss of generality let us assume that $\bigslant{K_1}{I_1}$ is infinite dimensional. Then there are two sequences

$$
\begin{tikzcd}
\bigslant{\left(K_1\cap I_2\right)}{I_{12}} \arrow{r}{f'} & \bigslant{K_1}{I_1} \arrow{r}{g'} & \bigslant{K_{12}}{\left(I_1+I_2\right)}\\
\bigslant{\left(K_1\cap K_2\right)}{I_{12}} \arrow{r}{f"} & \bigslant{K_1}{I_1} \arrow{r}{g"} & \bigslant{K_{12}}{\left(I_1+K_2\right)}
\end{tikzcd}
$$


It is easy to see that these sequences are exact in the middle (since the appropriate kernel and image are classes represented by elements of $\left(K_1\cap I_2\right)$). If both $\bigslant{\left(K_1\cap K_2\right)}{I_{12}}$ and $\bigslant{K_{12}}{\left(I_1+I_2\right)}$ have finite dimension, then so do $\bigslant{\left(K_1\cap I_2\right)}{I_{12}}$ and $\bigslant{K_{12}}{\left(I_1+K_2\right)}$ since they are smaller. But then the middle term has finite dimension by exactness, a contradiction.
\end{proof}

We can apply this lemma to transversely symplectic and transversely holomorphic structures to get

\begin{proposition} \label{sympnie}
If $\mathcal{F}$ is a transversely symplectic foliation for which $H^k(M\slash\mathcal{F})$ is infinitely dimensional then $H^k_{d+d^{\Lambda}}(M\slash\mathcal{F})$ or $H^k_{dd^{\Lambda}}(M\slash\mathcal{F})$ have infinite dimension.
\end{proposition}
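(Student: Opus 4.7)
The plan is to interpret the various basic cohomologies featured in the proposition as quotients of the form $K/I$ showing up in Lemma \ref{niesk}, and then to invoke that lemma directly. All spaces will be restricted to basic forms of the fixed degree $k$, so the proposition follows degree by degree.

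Concretely, I would set
\[
K_1 = \ker d,\quad I_1 = \operatorname{im} d,\quad K_2 = \ker d^{\Lambda},\quad I_2 = \operatorname{im} d^{\Lambda},
\]
\[
K_{12} = \ker(dd^{\Lambda}),\quad I_{12} = \operatorname{im}(dd^{\Lambda}),
\]
all taken inside $\Omega^{k}(M\slash\mathcal{F})$. With this choice, $K_1/I_1 = H^{k}(M\slash\mathcal{F})$, and one checks immediately that
\[
\frac{K_1 \cap K_2}{I_{12}} = \frac{\ker d \cap \ker d^{\Lambda}}{\operatorname{im}(dd^{\Lambda})} = H^{k}_{d+d^{\Lambda}}(M\slash\mathcal{F}),
\qquad
\frac{K_{12}}{I_1 + I_2} = H^{k}_{dd^{\Lambda}}(M\slash\mathcal{F}).
\]
(Note that $d^{\Lambda}$ lowers degree by one while $dd^{\Lambda}$ preserves it, so the restriction to $\Omega^{k}$ really does pick out each cohomology in the correct degree.)

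What remains is to verify the six inclusions required by the lemma. The inclusions $I_1 \subset K_1$ and $I_2 \subset K_2$ amount to $d^2 = 0$ and $(d^{\Lambda})^{2} = 0$; the former is immediate, the latter follows from $*_s^2 = \mathrm{Id}$ together with $d^2 = 0$ via the definition $d^{\Lambda} = (-1)^{k+1} *_s d *_s$. The inclusions $K_1, K_2 \subset K_{12}$ and $I_{12} \subset I_1 \cap I_2$ all follow from the relation $dd^{\Lambda} + d^{\Lambda}d = 0$ recalled in the excerpt: if $d\alpha = 0$ then $dd^{\Lambda}\alpha = -d^{\Lambda}d\alpha = 0$, so $K_1 \subset K_{12}$ and likewise for $K_2$; and $dd^{\Lambda}\beta = d(d^{\Lambda}\beta) = -d^{\Lambda}(d\beta)$ simultaneously exhibits any element of $I_{12}$ as lying in $\operatorname{im} d$ and in $\operatorname{im} d^{\Lambda}$.

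With all the hypotheses of Lemma \ref{niesk} in place, the infinite-dimensionality of $K_1/I_1 = H^{k}(M\slash\mathcal{F})$ forces $(K_1 \cap K_2)/I_{12}$ or $K_{12}/(I_1+I_2)$ to be infinite dimensional, which is exactly the claim. The only delicate point is bookkeeping: one must be sure that restricting the ambient spaces to a single degree $k$ is compatible with every inclusion above, so that the lemma applies within one graded piece rather than mixing degrees — but this is automatic since $d$ and $d^{\Lambda}$ shift degree in opposite directions and $dd^{\Lambda}$ is degree-preserving. I do not expect any genuine obstacle beyond this formal matching.
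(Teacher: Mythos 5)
Your proposal is correct and is essentially identical to the paper's own proof, which simply lists the same assignments $K_1=\ker d$, $I_1=\operatorname{im}d$, $K_2=\ker d^{\Lambda}$, $I_2=\operatorname{im}d^{\Lambda}$, $K_{12}=\ker dd^{\Lambda}$, $I_{12}=\operatorname{im}dd^{\Lambda}$ and invokes Lemma \ref{niesk}. The extra work you do --- checking the six inclusions via $dd^{\Lambda}+d^{\Lambda}d=0$ and noting that $\ker(d+d^{\Lambda})=\ker d\cap\ker d^{\Lambda}$ on each graded piece because $d$ and $d^{\Lambda}$ shift degree in opposite directions --- is exactly the verification the paper leaves implicit.
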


\begin{proof}
In the lemma take:

$$
\begin{array}{ccccc}
K_1= \text{ker } d && K_2=\text{ker } d^{\Lambda} && K_{12}=\text{ker } dd^{\Lambda} \\
I_1=\text{im } d && I_2=\text{im } d^{\Lambda} && I_{12}=\text{im } dd^{\Lambda}
\end{array}
$$

\end{proof}

\begin{proposition}\label{holonie}
If $\mathcal{F}$ is a transversely holomorphic foliation for which $H^{p,q}_{\bar{\partial}}(M\slash\mathcal{F})$ is infinite dimensional then $H^{p,q}_{BC}(M\slash\mathcal{F})$ or $H^{p,q}_{A}(M\slash\mathcal{F})$ have infinite dimension.
\end{proposition}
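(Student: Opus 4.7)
The plan is to mirror the proof of Proposition \ref{sympnie} almost verbatim, trading the pair $(d,d^{\Lambda})$ for the pair $(\bar{\partial},\partial)$. Since the statement is at a fixed bidegree $(p,q)$, I work inside $\Omega^{p,q}(M\slash\mathcal{F},\mathbb{C})$ and plug into Lemma \ref{niesk} the following six subspaces:
$$
K_1 = \ker\bar{\partial}\cap\Omega^{p,q}, \quad K_2 = \ker\partial\cap\Omega^{p,q}, \quad K_{12} = \ker\partial\bar{\partial}\cap\Omega^{p,q},
$$
$$
I_1 = \bar{\partial}\,\Omega^{p,q-1}, \quad I_2 = \partial\,\Omega^{p-1,q}, \quad I_{12} = \partial\bar{\partial}\,\Omega^{p-1,q-1}.
$$
With these choices the quotient $K_1/I_1$ is exactly $H^{p,q}_{\bar{\partial}}(M\slash\mathcal{F})$, the quotient $(K_1\cap K_2)/I_{12}$ is $H^{p,q}_{BC}(M\slash\mathcal{F})$, and $K_{12}/(I_1+I_2)$ is $H^{p,q}_A(M\slash\mathcal{F})$.

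Before invoking the lemma I need to check the six inclusions in its hypothesis. The inclusions $I_1\subset K_1$ and $I_2\subset K_2$ are just $\bar{\partial}^2=0$ and $\partial^2=0$. The inclusions $K_1,K_2\subset K_{12}$ and $I_{12}\subset I_1,I_2$ both rest on the identity $\partial\bar{\partial}=-\bar{\partial}\partial$: if $\bar{\partial}\alpha=0$ then $\partial\bar{\partial}\alpha=0$, and symmetrically for $\partial$, while an element $\partial\bar{\partial}\beta$ can be written either as $\bar{\partial}(-\partial\beta)\in I_1$ or as $\partial(\bar{\partial}\beta)\in I_2$. None of these is a nontrivial step.

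Once the inclusions are in place, Lemma \ref{niesk} applies directly: the assumption that $H^{p,q}_{\bar{\partial}}(M\slash\mathcal{F})=K_1/I_1$ is infinite dimensional forces either $(K_1\cap K_2)/I_{12}$ or $K_{12}/(I_1+I_2)$ to be infinite dimensional as well, which is exactly the dichotomy for $H^{p,q}_{BC}(M\slash\mathcal{F})$ and $H^{p,q}_A(M\slash\mathcal{F})$ claimed in the proposition.

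I do not expect any real obstacle; the whole argument is the Dolbeault transcription of Proposition \ref{sympnie}. The one point that needs minor care is to stay within a single bidegree $(p,q)$ throughout, so that the identifications of the three quotients with the relevant cohomology groups are literal rather than up to reshuffling of bidegree components.
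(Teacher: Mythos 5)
Your proposal is correct and coincides with the paper's own proof: the paper likewise applies Lemma \ref{niesk} with the kernels and images of $\partial$, $\bar{\partial}$, and $\partial\bar{\partial}$ (only with the roles of $K_1,I_1$ and $K_2,I_2$ swapped, which is immaterial since the lemma's hypothesis is symmetric in them). Your explicit verification of the six inclusions and the restriction to a fixed bidegree are details the paper leaves implicit, but they are the same argument.
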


\begin{proof}
In the lemma take:

$$
\begin{array}{ccccc}
K_1=\text{ker } \partial && K_2=\text{ker } \bar{\partial} && K_{12}=\text{ker } \partial\bar{\partial} \\
I_1=\text{im } \partial && I_2=\text{im } \bar{\partial} && I_{12}=\text{im } \partial\bar{\partial}
\end{array}
$$

\end{proof}

\begin{corollary}\label{holonie2}
If $\mathcal{F}$ is a transversely holomorphic foliation for which $H^{k}(M\slash\mathcal{F})$ is infinitely dimensional then, for some $(p,q)$ satisfying $p+q=k$, $H^{p,q}_{BC}(M\slash\mathcal{F})$ or $H^{p,q}_{A}(M\slash\mathcal{F})$ has infinite dimension.
\end{corollary}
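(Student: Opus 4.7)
The plan is to reduce the claim to Proposition \ref{holonie} via the basic Fr\"{o}licher spectral sequence, whose convergence to basic de Rham cohomology was recalled at the end of Section~2.1. The aim is to extract from the hypothesis $\dim H^{k}(M\slash\mathcal{F}) = \infty$ the existence of at least one pair $(p_{0},q_{0})$ with $p_{0}+q_{0}=k$ such that $H^{p_{0},q_{0}}_{\bar{\partial}}(M\slash\mathcal{F})$ is infinite dimensional, and then to invoke Proposition \ref{holonie} in that bidegree.

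First I would recall that the basic Fr\"{o}licher spectral sequence has first page $E_{1}^{p,q} = H^{p,q}_{\bar{\partial}}(M\slash\mathcal{F})$ and abuts to $H^{p+q}(M\slash\mathcal{F},\mathbb{C})$. Concretely this means $H^{k}(M\slash\mathcal{F},\mathbb{C})$ carries a decreasing filtration with at most $k+1$ nontrivial steps, whose successive quotients are precisely the $E_{\infty}^{p,q}$ with $p+q=k$; each such $E_{\infty}^{p,q}$ is a subquotient of $E_{1}^{p,q}$, so its dimension is bounded above by $\dim_{\mathbb{C}} H^{p,q}_{\bar{\partial}}(M\slash\mathcal{F})$.

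Next I would argue by contradiction. If every $H^{p,q}_{\bar{\partial}}(M\slash\mathcal{F})$ with $p+q=k$ were finite dimensional, then so would every $E_{\infty}^{p,q}$ with $p+q=k$; since the filtration is of finite length, $H^{k}(M\slash\mathcal{F},\mathbb{C}) = H^{k}(M\slash\mathcal{F})\otimes_{\mathbb{R}}\mathbb{C}$ would itself be finite dimensional, contradicting the hypothesis. Hence some $(p_{0},q_{0})$ with $p_{0}+q_{0}=k$ satisfies $\dim H^{p_{0},q_{0}}_{\bar{\partial}}(M\slash\mathcal{F}) = \infty$, and Proposition \ref{holonie} applied to this pair delivers the infinite dimensionality of $H^{p_{0},q_{0}}_{BC}(M\slash\mathcal{F})$ or $H^{p_{0},q_{0}}_{A}(M\slash\mathcal{F})$.

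I expect no serious obstacle: the argument is entirely formal once the Fr\"{o}licher convergence statement is in hand, and the only minor subtlety — the passage between real basic cohomology and its complexification — is harmless because complexification preserves finiteness of dimension.
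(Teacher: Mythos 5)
Your argument is correct and is essentially the paper's own proof: both reduce to Proposition \ref{holonie} by using the convergence of the basic Fr\"{o}licher spectral sequence to conclude that some $E_1^{p,q}=H^{p,q}_{\bar{\partial}}(M\slash\mathcal{F})$ with $p+q=k$ must be infinite dimensional, since the $E_\infty$-terms are subquotients of the $E_1$-terms and their (finitely many) dimensions bound that of $H^k(M\slash\mathcal{F})$. Your write-up merely makes explicit the filtration/complexification details that the paper leaves implicit.
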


\begin{proof}
By the previous proposition, it is sufficient to prove that Dolbeault cohomology has infinite dimension for some $(p,q)$ with $p+q=k$. This is obvious since the basic Fr\"{o}licher spectral sequence converges to the basic cohomology of $\mathcal{F}$ and so the dimensions of the entries on the first page must be greater than of those in the limit.
\end{proof}

\section{Transversely symplectic example.}

Consider, as in \cite{he}, a map of the 2-torus $\mathbb{T}^2$ given by the matrix $A=\left[\begin{smallmatrix}1 & 1\\ 0 & 1\end{smallmatrix}\right]$. We form a suspension of this map, $\left(M,\mathcal{F}_A\right)$: a codimension two foliation on $\bigslant{\mathbb{T}^2\times [0,1]}{(t,0)\sim (At,1)}$. The plaques of this foliation are the lines $[0,1]\times \{t_0\}$ and $\mathbb{T}^2$ can be taken for the transverse manifold, with the pseudgroup $\Gamma$ the infinite cyclic group generated by $A$. Since $\det A=1$, this foliation is transversely symplectic with the standard symplectic form $dx\wedge dy$ on $\mathbb{T}^2$.

We determine the basic complex. Any basic function $f$ must satisfy $f(x,y)=f(x+y,y)$. Taking an irrational $y_0$ and any $x_0$ we see that $f$ does not depend on the first coordinate, since it is constant on $\{(x_0+ny_0,y_0)\}$, dense in $\{(x,y_0)\}$. Therefore the basic functions correspond to smooth functions on a circle

\begin{align*}
\Omega^0(M\slash\mathcal{F}_A)&=\{f(y)\,|\,f\in\mathcal{C}^{\infty}(\mathbb{S}^1)\}
\end{align*}

In a similar fashion we see that

\begin{align*}
\Omega^1(M\slash\mathcal{F}_A)&=\{f(y)dy\,|\,f\in\mathcal{C}^{\infty}(\mathbb{S}^1)\} \\
\Omega^2(M\slash\mathcal{F}_A)&=\{f(y)dx\wedge dy\,|\,f\in\mathcal{C}^{\infty}(\mathbb{S}^1)\}
\end{align*}

It is then easy to see that the basic cohomology is

\begin{enumerate}
\item $H^0(M\slash\mathcal{F}_A)=H^1(M\slash\mathcal{F}_A)=\mathbb{R}$
\item $H^2(M\slash\mathcal{F}_A)=\mathcal{C}^{\infty}(\mathbb{S}^1)$
\end{enumerate}

We note again that this precludes this foliation from being Riemannian.

We compute $H^{\bullet}_{d+d^{\Lambda}}(M\slash\mathcal{F}_A)$ and $H^{\bullet}_{dd^{\Lambda}}(M\slash\mathcal{F}_A)$. Observe that $dd^\Lambda=-d^{\Lambda} d=0$. In degree 2 and 0, it factors through the trivial spaces $\Omega^3(M\slash\mathcal{F}_A)$ and 
$\Omega^{-1}(M\slash\mathcal{F}_A)$, respectively. In degree 1 $dd^\Lambda f(y)dy = -d^{\Lambda}d f(y)dy=0$, or because $*_s=id$ on $\Omega^1(M\slash\mathcal{F}_A)$. Consequently

\begin{enumerate}
\item $H^{0}_{d+d^{\Lambda}}(M\slash\mathcal{F}_A)=H^{2}_{d+d^{\Lambda}}(M\slash\mathcal{F}_A)=\mathbb{R}$ 
\item $H^{1}_{d+d^{\Lambda}}(M\slash\mathcal{F}_A)=\mathcal{C}^{\infty}(\mathbb{S}^1)$
\end{enumerate}

\begin{enumerate}
\item $H^{0}_{dd^{\Lambda}}(M\slash\mathcal{F}_A)=H^{2}_{dd^{\Lambda}}(M\slash\mathcal{F}_A)=\mathcal{C}^{\infty}(\mathbb{S}^1)$
\item $H^{1}_{dd^{\Lambda}}(M\slash\mathcal{F}_A)=\mathbb{R}$
\end{enumerate}  

\section{Transversely holomorphic example.}

To provide a transversely holomorphic foliation exhibiting the similar behaviour we mimic the construction presented above. We take the map of the 4-torus $\mathbb{T}^4$ induced by the matrix $A=\left[\begin{smallmatrix}1 & 0 & 1 & 0\\ 0 & 1 & 0 & 1 \\ 0 & 0 & 1 & 0\\ 0 & 0 & 0 & 1 \end{smallmatrix}\right]$. We form a suspension of this map, $\left(M,\mathcal{F}_A\right)$: a codimension four foliation on $\bigslant{\mathbb{T}^4\times [0,1]}{(t,0)\sim (At,1)}$. Since $A$ is in $Gl(2,\mathbb{C})\subset Gl(4,\mathbb{R})$ this foliation is transversally holomorphic with the complex structure induced from $\mathbb{T}^4$.

On the transverse manifold $\mathbb{T}^4$ we will use real coordinates $(x_1,y_1,x_2,y_2)$ (better suited for the suspension) and then switch to complex coordinates $(w,z)=(x_1+iy_1,x_2+iy_2)$ (better suited for the bigradation of the complex forms). We will describe the $A$-invariant forms, computing only the 2-forms explicitly as an example. The operators $\partial$, $\bar{\partial}$ and $\partial\bar{\partial}$ will prove to be not too complicated and we will proceed to compute basic de Rham, Dolbeault, Aepli and Bott-Chern cohomologies.

\subsection{Invariant forms.}

As in the previous example we can easily see the invariant complex functions to depend only on the last two real coordinates, or on the complex coordinate $z$. Hence $\Omega^0(M\slash\mathcal{F}_A,\mathbb{C})=\mathcal{C}^{\infty}(\mathbb{T}^2,\mathbb{C})$.

An $A$-invariant complex 2-form on the 4-torus is a skew-symmetric matrix

$$
\alpha=\left[\begin{smallmatrix}0 & f_1 & f_2 & f_3 \\
-f_1 & 0 & f_4 & f_5 \\
-f_2 & -f_4 & 0 & f_6 \\
-f_3 & -f_5 & -f_6 & 0\end{smallmatrix}\right]
$$

where each entry is an invariant function, satisfying

$$
\alpha = A^t \alpha A
$$

that amounts to $f_1=0$ and $f_3=f_4$. This gives 

\begin{align*}
\Omega^2(M\slash\mathcal{F}_A,\mathbb{C})= &\{f_2(x_2,y_2)dx_1\wedge dx_2\} \\
&\oplus {}{} \{f_3(x_2,y_2)\left(dx_1\wedge dy_2+dy_1\wedge dx_2\right)\} \\
&\oplus {}{} \{f_5(x_2,y_2)dx_2\wedge dy_2\} \\
&\oplus {}{} \{f_6(x_2,y_2)dx_2\wedge dy_2\}
\end{align*} 

which we will now rewrite in complex coordinates

\begin{align*}
\Omega^2(M\slash\mathcal{F}_A,\mathbb{C})= &\{b(z)dw\wedge dz\} \\
&\oplus {}{} \{c(z)\left(dw\wedge d\bar{z}+d\bar{w}\wedge dz\right)\} \\
&\oplus {}{} \{e(z)d\bar{w}\wedge d\bar{z}\} \\
&\oplus {}{} \{f(z)dz\wedge d\bar{z}\}
\end{align*} 

Note that any complex function of the complex coordinate is to be smooth, not holomorphic. We present all the invariant forms with the complex bigradation $\Omega^{\bullet,\bullet}(M\slash\mathcal{F}_A,\mathbb{C})$, indicating where the differentials are obviously trivial. We use a generic letter $g$ for functions in degrees other than 2, since the labeling will play no role there.

\begin{figure}[H]\label{f1}
\begin{tikzpicture}
\matrix (m) [matrix of math nodes, nodes in empty cells,nodes={minimum width=10pt, minimum height=20pt,outer sep=1pt}, row sep=2em, column sep=2em]{
\quad\strut &      &     &     & \\
2 & \left\{e(z)d\bar{w}\wedge d\bar{z}\right\} & \left\{g(z)dz\wedge d\bar{w}\wedge d\bar{z}\right\} & \left\{g(z)dw\wedge dz\wedge d\bar{w}\wedge d\bar{z}\right\} \\
1 & \left\{g(z)d\bar{z}\right\} & \adhoc{\left\{f(z)dz\wedge d\bar{z}\right\}}{\left\{c(z)\left(dw\wedge d\bar{z}+d\bar{w}\wedge dz\right)\right\}} & \left\{g(z)dw\wedge dz\wedge d\bar{z}\right\} \\
0 & \left\{g(z)\right\} & \left\{g(z)dz\right\} & \left\{b(z)dw\wedge dz\right\} \\
\quad\strut &   0  &  1  &  2  & \strut \\};
\draw[-stealth] (m-3-2.east) to [out=-20,in=225] (m-3-3.south west);
\draw[-stealth] (m-4-3.north) to [out=110,in=225] (m-3-3.south west);
\draw[-stealth] (m-3-3.north east) to [out=45,in=270] (m-2-3.south);
\draw[-stealth] (m-3-3.north east) to [out=45,in=180] (m-3-4.west);

\draw[-stealth] (m-3-2.north) to node[left] {0} (m-2-2.south);
\draw[-stealth] (m-3-4.north) to node[right] {0} (m-2-4.south);

\draw[-stealth] (m-4-2.north) to (m-3-2.south);
\draw[-stealth] (m-4-4.north) to (m-3-4.south);

\draw[-stealth] (m-4-3.east) to node[below] {0} (m-4-4.west);
\draw[-stealth] (m-2-3.east) to node[above] {0} (m-2-4.west);

\draw[-stealth] (m-2-2.east) to (m-2-3.west);
\draw[-stealth] (m-4-2.east) to (m-4-3.west);

\draw[thick] (m-1-1.east) -- (m-5-1.east) ;
\draw[thick] (m-5-1.north) -- (m-5-5.north) ;
\end{tikzpicture}
\end{figure}

\newpage

The curvy arrows are meant to indicate that $d\Omega^1(M\slash\mathcal{F}_A)$ is contained in the $\{f(z)dz\wedge d\bar{z}\}$ term of $\Omega^{1,1}(M\slash\mathcal{F}_A)$ and $d\Omega^{1,1}(M\slash\mathcal{F}_A)=d\{c(z)\left(dw\wedge d\bar{z}+d\bar{w}\wedge dz\right)\}$. Note that the diagram shows that $\partial\bar{\partial}$ can be non-zero only on the 0-forms.

\subsection{De Rham cohomology.} We compute the basic cohomology over $\mathbb{C}$. Some of the spaces involved can be described in terms of cohomology of the complex torus $\mathbb{T}^2$ -- parts of the diagram above clearly repeat parts of $\Omega^{\bullet,\bullet}(\mathbb{T}^2,\mathbb{C})$ -- which is not complicated, because the torus is K\"{a}hler.

Since the differentials are quite simple too, we hope that the reader will have no trouble justifying the claims below.

\begin{enumerate}
\item $H^0(M\slash\mathcal{F}_A)\simeq H^0(\mathbb{T}^2)=\mathbb{C}$;
\item $H^1(M\slash\mathcal{F}_A)\simeq H^1(\mathbb{T}^2)=\mathbb{C}^2$;
\item $H^2(M\slash\mathcal{F}_A)\simeq V\oplus H^2(\mathbb{T}^2)=V\oplus\mathbb{C}$, where $V$ is an infinite dimensional space $\{\bar{\partial} b-\partial c=\bar{\partial} c-\partial e=0\}$ easily seen to be infinite dimensional; none of these closed forms is exact since the image $d\Omega^1(M\slash\mathcal{F}_A)\subset \{f(z)dz\wedge d\bar{z}\}$; the term $H^2(\mathbb{T}^2)$ follows from this inclusion;
\item $H^3(M\slash\mathcal{F}_A)\simeq \left(H^2(\mathbb{T}^2)\right)^2=\mathbb{C}^2$ since the $dw$ and $d\bar{w}$ factors do not interfere in any way;
\item $H^4=\mathcal{C}^{\infty}(\mathbb{T}^2,\mathbb{C})$ since the image $d\Omega^3(M\slash\mathcal{F}_A)$ is trivial;
\end{enumerate}

We present the three complex cohomologies in diagrams explaining their entries below each one.

\subsection{Dolbeault cohomology} 

\begin{figure}[H]
\begin{tikzpicture}
\matrix (m) [matrix of math nodes, nodes in empty cells,nodes={minimum width=10pt, minimum height=10pt,outer sep=5pt}, row sep=2em, column sep=2em]{
\quad\strut &      \makebox[50pt]{}&     \makebox[50pt]{}&     \makebox[50pt]{}& \\
2 \strut&   \mathcal{C}^{\infty}(\mathbb{T}^2,\mathbb{C})  &  \mathbb{C}   &   \mathcal{C}^{\infty}(\mathbb{T}^2,\mathbb{C})  & \strut\\
1 \strut&  \mathbb{C}    &  \mathbb{C}\oplus\mathbb{C} &  \mathbb{C}   & \strut\\
0 \strut&  \mathbb{C}    &  \mathbb{C}   &  \mathbb{C}   & \strut\\
\quad\strut & \makebox[50pt]{0}&  \makebox[50pt]{1}&  \makebox[50pt]{2}& \strut \\};

\draw[line width=0.25mm] (m-1-1.east) -- (m-5-1.east) ;
\draw[line width=0.25mm] (m-5-1.north) -- (m-5-5.north) ;

\draw[dashed,line width=0.1mm] (m-1-2.east) -- (m-5-2.east) ;
\draw[dashed,line width=0.1mm] (m-1-3.east) -- (m-5-3.east) ;

\draw[dashed,line width=0.1mm] (m-4-1.north) -- (m-4-5.north) ;
\draw[dashed,line width=0.1mm] (m-3-1.north) -- (m-3-5.north) ;

\end{tikzpicture}
\end{figure}

\begin{enumerate}
\item $H^{0,0}_{\bar{\partial}}$ is represented by constant functions;
\item $H^{1,0}_{\bar{\partial}}$ and $H^{2,0}_{{\bar{\partial}}}$ are represented by antiholomorphic functions;
\item $H^{1,1}_{{\bar{\partial}}}$ splits as $H^{1,1}_{{\bar{\partial}}}(\mathbb{T}^2)$ and antiholomorphic functions;
\item $H^{0,1}_{{\bar{\partial}}}$ is again represented by antiholomorphic functions -- note that $\bar{\partial}\{g(z)\}$ are precisely all functions divided by antiholomorphic ones; the same reasoning applies to $H^{2,1}_{{\bar{\partial}}}$ and $H^{1,2}_{{\bar{\partial}}}$;
\item the remaining spaces are $\mathcal{C}^{\infty}(\mathbb{T}^2,\mathbb{C})$ since the relevant differentials are trivial;
\end{enumerate}

\subsection{Bott-Chern cohomology}

\begin{figure}[H]
\begin{tikzpicture}
\matrix (m) [matrix of math nodes, nodes in empty cells,nodes={minimum width=10pt, minimum height=10pt,outer sep=5pt}, row sep=2em, column sep=2em]{
\quad\strut &      \makebox[50pt]{}&     \makebox[50pt]{}&     \makebox[50pt]{}& \\
2 \strut&   \mathbb{C}  &  \mathcal{C}^{\infty}(\mathbb{T}^2,\mathbb{C})   &   \mathcal{C}^{\infty}(\mathbb{T}^2,\mathbb{C})  & \strut\\
1 \strut&  \mathbb{C}    &  \mathbb{C}\oplus \mathbb{C}  &  \mathcal{C}^{\infty}(\mathbb{T}^2,\mathbb{C})   & \strut\\
0 \strut&  \mathbb{C}    &  \mathbb{C}   &  \mathbb{C}   & \strut\\
\quad\strut & \makebox[50pt]{0}&  \makebox[50pt]{1}&  \makebox[50pt]{2}& \strut \\};

\draw[line width=0.25mm] (m-1-1.east) -- (m-5-1.east) ;
\draw[line width=0.25mm] (m-5-1.north) -- (m-5-5.north) ;

\draw[dashed,line width=0.1mm] (m-1-2.east) -- (m-5-2.east) ;
\draw[dashed,line width=0.1mm] (m-1-3.east) -- (m-5-3.east) ;

\draw[dashed,line width=0.1mm] (m-4-1.north) -- (m-4-5.north) ;
\draw[dashed,line width=0.1mm] (m-3-1.north) -- (m-3-5.north) ;

\end{tikzpicture}
\end{figure}

\begin{enumerate}
\item $H^{0,0}_{BC}$ is represented by constant functions;
\item $H^{1,0}_{BC}$ and $H^{1,0}_{BC}$ are represented by holomorphic and antiholomorphic functions, respectively;
\item so are $H^{2,0}_{BC}$ and $H^{2,0}_{BC}$;
\item $H^{1,1}_{BC}$ is $H^{1,1}_{BC}(\mathbb{T}^2)$ plus constant functions coming from the second summand in $\Omega^{1,1}(M\slash\mathcal{F}_A)$;
\item $H^{2,1}_{BC}=H^{1,2}_{BC}=H^{2,2}_{BC}=\mathcal{C}^{\infty}(\mathbb{T}^2,\mathbb{C})$ since each of the $\partial$, $\bar{\partial}$, and $\partial\bar{\partial}$ is trivial in these cases;
\end{enumerate}

\subsection{Aeppli cohomology}

\begin{figure}[H]
\begin{tikzpicture}
\matrix (m) [matrix of math nodes, nodes in empty cells,nodes={minimum width=10pt, minimum height=20pt,outer sep=5pt}, row sep=2em, column sep=2em]{
\quad\strut &      \makebox[50pt]{}&     \makebox[50pt]{}&     \makebox[50pt]{}& \\
2 \strut&   \mathcal{C}^{\infty}(\mathbb{T}^2,\mathbb{C})  &  \mathbb{C}   &   \mathcal{C}^{\infty}(\mathbb{T}^2,\mathbb{C})  & \strut\\
1 \strut&  \mathbb{C}    &  \mathbb{C}\oplus\mathcal{C}^{\infty}(\mathbb{T}^2,\mathbb{C}) &  \mathbb{C}    & \strut\\
0 \strut&  \mathbb{C}    &  \mathbb{C}   &   \mathcal{C}^{\infty}(\mathbb{T}^2,\mathbb{C}) & \strut\\
\quad\strut & \makebox[50pt]{0}&  \makebox[50pt]{1}&  \makebox[50pt]{2}& \strut \\};

\draw[line width=0.25mm] (m-1-1.east) -- (m-5-1.east) ;
\draw[line width=0.25mm] (m-5-1.north) -- (m-5-5.north) ;

\draw[dashed,line width=0.1mm] (m-1-2.east) -- (m-5-2.east) ;
\draw[dashed,line width=0.1mm] (m-1-3.east) -- (m-5-3.east) ;

\draw[dashed,line width=0.1mm] (m-4-1.north) -- (m-4-5.north) ;
\draw[dashed,line width=0.1mm] (m-3-1.north) -- (m-3-5.north) ;

\end{tikzpicture}
\end{figure}

\begin{enumerate}
\item $H^{0,0}_{A}$ is represented by constant functions;
\item $H^{1,0}_{A}=H^{1,0}_{A}(\mathbb{T}^2)$ and $H^{0,1}_{A}=H^{0,1}_{A}(\mathbb{T}^2)$;
\item $H^{2,0}_{A}$ and $H^{0,2}_{A}$ are all the relevant forms, since the differentials are all zero in these cases; 
\item $H^{1,1}_{A}$ splits into the $H^{1,1}_{BC}(\mathbb{T}^2)=H^2(\mathbb{T}^2)$ and $\mathcal{C}^{\infty}(\mathbb{T}^2,\mathbb{C})$ since the $\partial\bar{\partial}$ is trivial;
\item $H^{2,1}_{A}=H^{1,2}_{A}$ are again $H^{1,2}_{A}(\mathbb{T}^2)=H^{2,1}_{A}(\mathbb{T}^2)$;
\item $H^{2,2}_{A}=\mathcal{C}^{\infty}(\mathbb{T}^2,\mathbb{C})$ since all the differentials are trivial;
\end{enumerate}

\section{Conclusions}

We close this paper summarising some interesting properties of the given examples.

\begin{remark}
The transversely symplectic example highlights that the infinite dimension of symplectic cohomology in dimension $k$ may stem, by Proposition \ref{sympnie}, form infinite dimension of de Rham basic cohomology in degree $k$ ($H^{2}_{dd^{\Lambda}}(M\slash\mathcal{F}_A)$), or degree $2n-k$, via Poincar\'e duality ($H^{0}_{dd^{\Lambda}}(M\slash\mathcal{F}_A)$), or indeed it can be unprovoked by any of these ($H^{1}_{d+d^{\Lambda}}(M\slash\mathcal{F}_A)$).

\end{remark}

\begin{remark}
The transversely holomorphic example exhibits infinite dimensional basic, Dolbeault, Bott-Chern and Aeppli cohomologies. The basic Aeppli cohomology is infinite dimensional in bidegrees $(2,0)$, $(1,1)$, $(0,2)$, and $(2,2)$. In bidegree $(1,1)$ both the basic Dolbeault cohomology and its adjoint counterpart (the basic $\partial$-cohomology) are finite dimensional. This shows that also the basic Aeppli cohomology can be infinite dimensional without the help of basic Dolbeault cohomology and Proposition \ref{sympnie}. The same thing happens for the basic Bott-Chern cohomology in bidegrees $(2,1)$ and $(1,2)$. We also note that while $H^{1,1}_{A}(M\slash\mathcal{F}_A)$ could be perhaps explained by infinite dimension of $H^2(M\slash\mathcal{F}_A)$, it is not the case for either $H^{2,1}_{BC}(M\slash\mathcal{F}_A)$ or $H^{1,2}_{BC}(M\slash\mathcal{F}_A)$.
\end{remark}

\begin{remark}
The example proves that Poincar\'e duality between Bott-Chern and Aeppli cohomology may fail in absence of Riemannian metric. We also point out that for the same reason the Dolbeault cohomology of the example does not exibit Serre duality.
\end{remark}

\begin{remark}
It is also worth pointing out that the basic de Rham cohomology of this example is infinite dimensional in degrees 2 and 4, but reverts to being finite dimensional in degree 3. To the extent of our knowledge such example have not been described before.
\end{remark}

\begin{remark}
The property of reverting to finite dimension is important for the further developements in this field. The richest geometry in the transversely symplectic setting is the K\"{a}hler structure, and short of that -- the hard Lefshetz property, that


$$
\begin{tikzcd}
\Omega^{n-k}(M\slash\mathcal{F}_A) \arrow{rr}{\wedge\omega^k} && \Omega^{n+k}(M\slash\mathcal{F}_A)
\end{tikzcd}
$$

is an epimorphism. It is a theorem that this property forces the map

$$
H^{\bullet}_{d+d^{\Lambda}}(M\slash\mathcal{F}_A)\ni [\alpha] \mapsto [\alpha] \in H^{\bullet}(M\slash\mathcal{F}_A)
$$

to be epimorphic as well, cf. \cite{BC,Yau}. It is natural to ask for examples where infinite dimension of the former is derived from infinite dimension of the latter via the hard Lefschetz property. However, as we pointed out in the introduction, $H^0(M\slash\mathcal{F}_A)$ and $H^1(M\slash\mathcal{F}_A)$ are always finite dimensional and there is no known example of infinite dimensional basic cohomology without the infinite dimension in the top degree.
\end{remark}


\begin{thebibliography}{99}
\bibitem{A1} D. Angella,
\emph{Cohomologies of certain orbifolds}, J. Geom. Phys., 71, 117-126 (2013)
\bibitem{appli} D. Angella, G. Dloussky, A. Tomassini,
\emph{On Bott-Chern cohomology of compact complex surfaces}, Ann. Mat. Pura Appl. (4) 195(1), 199-217 (2016) 
\bibitem{Ang} D. Angella, A. Tomassini,
\emph{On the $\partial\bar{\partial}$-Lemma and Bott-Chern cohomology}, Invent.
Math. 192(1), 71-81 (2013)
\bibitem{BC} \L. B\k{a}k, A. Czarnecki,
\emph{A remark on the Brylinski conjecture for orbifolds}, J. Aust. Math. Soc. 91(01), 1-12 (2011)
\bibitem{E1} A. El Kacimi-Alaoui,
\emph{Op\'{e}rateurs transversalement elliptiques sur un feuilletage riemannien et applications}, Compositio Mathematica, 73, 57-106 (1990)
\bibitem{ghys} E. Ghys
\emph{Un feuilletage analytique dont la cohomologie basique est de dimension infinie}, Publ. de l'lRMA de Lille, VII (1985)
\bibitem{he} Z. He, 
\emph{Odd dimenisonal symplectic manifolds}, MIT Ph.D thesis, http://hdl.handle.net/1721.1/60189 (2010)
\bibitem{Ida}  C. Ida, P. Popescu
\emph{Coeffective basic cohomologies of K-contact and Sasakian manifolds}, Michigan Math. J. 64 (2015), no. 4, 797-817.
\bibitem{Ma} O. Mathieu,
\emph{Harmonic cohomology classes of symplectic manifolds}, Comment. Math. Helvetici 70, 1-9 (1995)
\bibitem{M1} P. Molino,
\emph{Riemannian foliations},  Birkh\"{a}user (1986)
\bibitem{My} P. Ra\'{z}ny
\emph{The Fr\"{o}licher-type inequalities of foliations}, J. Geom. Phys., 114, 593-606 (2017)
\bibitem{schwarz} G.W.Schwarz
\emph{On the de Rham cohomology of the leaf space of a foliation}, Topology, 13, 185-187 (1974)
\bibitem{Yau} L.-S.Tseng; S.-T. Yau
\emph{Cohomology and Hodge theory on symplectic manifolds: I}, J. Diff. Geom. 91(3), 383-416 (2012)
\end{thebibliography}
\end{document}